\newcommand{\C}{\mathbb C}
\newcommand{\D}{\mathbb D}
\renewcommand{\Re}{\mathop {\rm Re}\nolimits}
\newtheorem{theorem}{Theorem}
\newtheorem{lemma}[theorem]{Lemma}
\newtheorem{corollary}[theorem]{Corollary}
\newtheorem{remark}[theorem]{Remark}
\begin{document}
\date {\today}
\title{The numerical range as a spectral set}

\author{Michel Crouzeix\footnote{Universit\'e de Rennes, email: Michel.Crouzeix@univ-rennes1.fr}  \hspace{2pt}  and C\'esar Palencia\footnote{Universidad de Valladolid, email: cesar.palencia@tel.uva.es}}

\maketitle

\begin{abstract} 
It is shown that the numerical range of a linear operator operator in a Hilbert space  is a (complete) $(1{+}\sqrt2)$-spectral set. The  proof relies, among other things,  in the behavior of the Cauchy transform of the conjugates of  holomorphic functions.
\end{abstract}

\paragraph{2000 Mathematical subject 3assifications\,:}47A25 ; 47A30

\noindent{\bf Keywords\,:}{ numerical range, spectral set}

\section{Introduction}
Let us consider a smooth, bounded, convex domain  $\Omega \subset\C$. In a seminal paper\,\cite{dede}, Bernard and Fran\c{c}ois Delyon showed that there exists a best constant $C_{\Omega}$ such that, for all rational functions $f$, there holds
\begin{equation}\label{bfd}
\|f(A)\|\leq C_{\Omega} \sup_{z\in\Omega}|f(z)|,
\end{equation}
whenever $A$ is a bounded linear operator in a complex Hilbert space $(H, \langle, \rangle, \| \,\|)$ whose numerical range 
$$
W(A): = \{ \, \langle Av,v\rangle \, : \,  v\in H,  \, \|v\|=1 \, \}
$$
satisfies $\overline{W(A)}\subset\Omega$.

Their work has inspired the conjecture\,\cite{crzx} $\mathcal{Q}:=\sup_{\Omega} C_{\Omega}=2$  and it has been shown in \,\cite{crac} that $2\leq \mathcal{Q}\leq 11.08$.  Although there is numerical support to it \cite{grlo},  the conjecture $\mathcal{Q} = 2$ remains to be an open problem and we refer to \cite{choi,chgr, grlo,crzx1} and the bibliography in \cite{crzx1} for the relevant background on the issue.

The aim of this paper is to present the improvement  (Theorem~\ref{improved} below) 
\begin{equation}
\label{our1}
2\leq \mathcal{Q}\leq 1{+}\sqrt2.
\end{equation}
Note that,  due to Mergelyan theorem, estimate \eqref{bfd} is valid, not only for rational functions $f$, but also for any $f$ belonging to the algebra
$$
\mathcal{A}(\Omega):=\{ \, f\, : \,  f  \mbox{ is holomorphic in } \Omega \mbox{ and continuous in }\overline\Omega \, \}.
$$
Furthermore, by using a sequence of smooth convex domains
 $\Omega_n\supset \overline{W(A)}$ converging to $\overline{W(A)}$, from (\ref{our1}) we easily get
 \begin{equation}
\label{our11}
 \|f(A)\|\leq (1{+}\sqrt2)\sup_{z\in W(A)}|f(z)|,
 \end{equation}
which shows that the numerical range $W(A)$ is a $(1{+}\sqrt2)$-spectral set for the operator $A$.
Furthermore, since $C_\Omega$ is uniformly bounded, \eqref{bfd} is still valid for all convex domains, even for unbounded ones, which allows to extend (\ref{our11}) to unbounded operators under classical suitable conditions.

For the sake of simplicity, we work with complex-valued functions, but there is no difficulty in generalizing the proof we give of (\ref{our1})  to matrix-valued mappings $f$, without changing the constant. Therefore\,\cite{paul}, the homomorphism $f\mapsto f(A)$, from the algebra ${\mathcal A}(W(A))$ into $B(H)$, is completely bounded by $1{+}\sqrt2$. In other words, the numerical range $W(A)$ is a complete $(1{+}\sqrt2)$-spectral set for the operator $A$.

Let us recall that the numerical radius $w(B)$ of a linear operator $B$ in the Hilbert space $H$ is the number
$$
w(B)=\sup_{z \in W(B)} |z|.
$$
Then, after (\ref{our1}), the interesting result \cite[Theorem~3.1]{dpw} implies
\begin{equation}
\label{others}
w(f(A)) \le \sqrt{2}\,  \sup_{z\in W(A)}|f(z)|, 
\end{equation}
for all rational functions bounded in $W(A)$, an estimate which also holds in the complete version. In the terminology of \cite{dpw},  this means that $W(A)$ is a complete $\sqrt2$-radius set for the operator $A$.

Let us point out that our approach to (\ref{our1})  is based on the Cauchy transform and only uses elementary tools. In particular, we do not use  dilation theory, which has shown its efficiency in the case where $\Omega$ is a disk.

The paper is organized in two sections. Section~\ref{AL} is devoted to some auxiliary lemmata, one of them (Lemma~\ref{average}) studies the behavior of the Cauchy transform $g$ of the conjugate of $f \in \mathcal{A}(\Omega)$ up to the boundary of $\Omega$, an interesting issue that is addressed in the maximum norm setting by using the double layer potential. This, combined with a representation for the balance $f(A) + g(A)^*$ (Lemma~\ref{lemaS}) are the tools for the proof of the main result (\ref{our1}),   presented in Section~\ref{MR}. The proof of (\ref{our1}) also shows that
$$
\| f (A ) \| \le 2\, \| f\|_\infty,
$$
in case $f$ takes values in some sector with vertex at the origin and angle $\pi/2$, as commented in  final Remark~\ref{final}. 

\section{Auxiliary lemmata}
\label{AL}
The boundary $\partial \Omega$ of the open, bounded, convex set $\Omega \subset \C$ is assumed to be smooth.  In the following, the algebra $\mathcal{A}(\Omega)$ is provided with the norm
$$
\|f\|_{\infty}=\max \{ \, |f(z)| :   z\in\overline\Omega \, \}, \qquad f \in \mathcal{A}(\Omega).
$$
Besides, $\mathcal{C}(\partial \Omega)$ stands for the set of the complex continuous functions on $\partial \Omega$, endowed with the norm
$$
 \| \varphi \|_{\partial \Omega} := \max \{ \, |\varphi (\sigma )|:\sigma \in \partial \Omega \, \}, \qquad \varphi \in \mathcal{C}(\partial \Omega).
$$

For  $\sigma \in \partial   \Omega $, the corresponding unit outward normal vector is denoted by $\nu=\nu(\sigma)$ and, for $\sigma \in \partial \Omega$ and $z \in\C\setminus\{ \sigma\}$, we introduce the double layer potential
$$
\mu(\sigma,z) =\frac{1}{2\pi }\Big(\frac{\nu}{\sigma -z }+\frac{\overline{\nu}}{\overline{\sigma} -\bar z }\Big),
$$
where $\nu = \nu(\sigma)$. It is geometrically clear that the set
$$
\Pi _\sigma :=\{ \, z\in \C : \Re(\nu (\bar\sigma{-}\bar z))>0 \, \}=\{ \, z\in \C\setminus\{\sigma\}: \mu (\sigma ,z)>0 \, \}
$$ 
is the open half-plane tangent containing $\Omega$ which is tangent  to $\partial \Omega$ at point  $\sigma$. Therefore, since $\Omega$ is convex, the claim $z\in \Omega$ is equivalent to say that $\mu (\sigma ,z)>0$, for all $\sigma \in\partial \Omega$. Analogously, we also note that $\mu(\sigma,\sigma_0) \ge 0$, for $\sigma, \, \sigma_0 \in \partial \Omega$ such that $\sigma \ne \sigma_0$.

Furthermore, we use a counterclockwise oriented, arclength parametrization $\sigma(s)$ of  $\partial \Omega$. Then, $\sigma(\cdot) $ is $L$ periodic, with $L$ the length of $\partial \Omega$. It is  noteworthy that $\nu(\sigma(s))=\sigma'(s)/i$ and
\begin{equation}
\label{muarg}
\mu (\sigma(s) ,z)=\frac{1}{\pi}\frac{d\arg(\sigma(s){ -}z)}{ds }, \qquad \forall z \ne \sigma(s),
\end{equation}
where $\arg$ stands for any continuous branch of the argument function defined in some neighborhood of $\sigma(s)-z \ne 0$.
In the light of this identity, it is also geometrically clear that
\begin{equation}
\label{intmu}
\int_{\partial \Omega} \mu(\sigma,z) \, ds = 2, \quad \forall z \in \Omega,  \quad \mbox{  and  } \quad  \int_{\partial \Omega} \mu(\sigma,\sigma_0) \, ds = 1,  \quad \forall \sigma_0 \in \partial \Omega ,
\end{equation}
and in particular $\mu(\sigma(\cdot), \sigma_0)$ is a density of probability for $\sigma_0 \in \partial\Omega$.

We define the Cauchy transform of a complex function $\varphi $, defined at least on the boundary of $\Omega$ and continuous on it, as
$$
C(\varphi ,z)=\frac{1}{2\pi  i}\int_{\partial \Omega}\varphi (\sigma )\frac{d\sigma}{\sigma -z} ,\quad \text{for } z\in\Omega.
$$
Whereas $C(\varphi,\cdot)$ is holomorphic in $\Omega$, the behavior of $C(\varphi,z)$ as $z \in \Omega$ approaches a boundary point, in general, is not clear. In Lemma~\ref{average} below,  we address this issue when $\varphi$ is the boundary value of the conjugate of $f \in \mathcal{A}(\Omega)$, which is  the situation of interest in the present paper.  
\begin{lemma}
\label{average}
 Assume that $f\in \mathcal A(\Omega)$. Then  $g = C(\bar f,\cdot)$ belongs to ${\cal A}(\Omega)$  and satisfies
 $$
\| g \|_\infty \le \|f \|_\infty.
$$
Furthermore,  $g(\partial \Omega) = \{ \, g(\sigma) \, : \, \sigma \in \partial \Omega \, \}$ is contained in the convex hull $\mbox{\rm conv} (\overline{f(\partial \Omega)}) $ of $\overline{f(\partial \Omega)}= \{ \, \overline{f(\sigma)} \, : \, \sigma \in \partial \Omega \, \}$. 
\end{lemma}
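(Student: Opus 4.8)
The plan is to realize the boundary values of $g=C(\bar f,\cdot)$ as averages of the boundary values of $\bar f$ against the probability density $\mu(\sigma(\cdot),\sigma_0)$, and then read off both assertions from the fact that an average of a family of points lies in the closed convex hull of that family. Concretely, I would first record the classical jump relation for the Cauchy integral on a smooth curve (Plemelj): as $z\to\sigma_0\in\partial\Omega$ from inside $\Omega$,
$$
C(\varphi,z)\longrightarrow \tfrac12\varphi(\sigma_0)+\frac{1}{2\pi i}\;\mathrm{p.v.}\!\int_{\partial\Omega}\varphi(\sigma)\frac{d\sigma}{\sigma-\sigma_0},
$$
with the convergence uniform in $\sigma_0$ when $\varphi\in\mathcal C(\partial\Omega)$; in particular $g$ extends continuously to $\overline\Omega$, i.e.\ $g\in\mathcal A(\Omega)$. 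The key algebraic observation is that for $\varphi=\bar f$ the singular integral can be turned into an ordinary one: writing $\bar f=\bar f(\sigma_0)+(\bar f(\sigma)-\bar f(\sigma_0))$ and using that $\frac{1}{2\pi i}\int_{\partial\Omega}\frac{d\sigma}{\sigma-z}$ equals $1$ for $z\in\Omega$ (hence its boundary limit contributes $\tfrac12$, cancelling the jump term), one gets for the boundary value
$$
g(\sigma_0)=\bar f(\sigma_0)+\frac{1}{2\pi i}\int_{\partial\Omega}\bigl(\bar f(\sigma)-\bar f(\sigma_0)\bigr)\frac{d\sigma}{\sigma-\sigma_0},
$$
where the integrand is now genuinely integrable because $\bar f$ is continuous (indeed Hölder, by Mergelyan/density or by first treating $f$ rational and passing to the limit using $\|g\|_\infty\le\|f\|_\infty$ on the dense subclass).

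Next I would rewrite this using $d\sigma=\sigma'(s)\,ds=i\,\nu(\sigma(s))\,ds$, so that $\frac{1}{2\pi i}\frac{d\sigma}{\sigma-\sigma_0}=\frac{1}{2\pi}\frac{\nu}{\sigma-\sigma_0}\,ds$. The real part of this kernel is exactly $\tfrac12\mu(\sigma,\sigma_0)$, but I want the full complex kernel; the trick is that $\overline{f(\sigma)}$ pairs with the anti-holomorphic half of $\mu$. Adding and subtracting, one checks
$$
g(\sigma_0)=\int_{\partial\Omega}\overline{f(\sigma)}\,\mu(\sigma,\sigma_0)\,ds \;+\;\frac{1}{2\pi}\int_{\partial\Omega}\bigl(\overline{f(\sigma)}-\overline{f(\sigma_0)}\bigr)\frac{\bar\nu}{\bar\sigma-\bar\sigma_0}\,ds,
$$
and the last integral vanishes: its integrand is the boundary value of the \emph{holomorphic} function $z\mapsto\frac{1}{2\pi}\int_{\partial\Omega}(\overline{f(\sigma)}-\overline{f(\sigma_0)})\frac{\bar\nu}{\bar\sigma-z}\,ds$ — wait, that is anti-holomorphic, so instead one argues directly: $\frac{\bar\nu\,ds}{\bar\sigma-\bar\sigma_0}=\overline{\left(\frac{\nu\,ds}{\sigma-\sigma_0}\right)}=\overline{\frac{d\sigma}{i(\sigma-\sigma_0)}}$, and $\frac{1}{2\pi i}\int_{\partial\Omega}\overline{f(\sigma)}\,\overline{\frac{d\sigma}{\sigma-\sigma_0}}=\overline{C(f,\sigma_0)}$-type terms, which one disposes of by Cauchy's theorem applied to the holomorphic function $f$ itself (the integral $\int_{\partial\Omega}(f(\sigma)-f(\sigma_0))\frac{d\sigma}{\sigma-\sigma_0}$ has a removable integrand, giving $2\pi i f(\sigma_0)\cdot 0$... ). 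The cleanest route, which I would ultimately adopt, is simply to verify the identity
$$
g(\sigma_0)=\int_{\partial\Omega}\overline{f(\sigma)}\,\mu(\sigma,\sigma_0)\,ds
$$
directly by the same add-and-subtract computation, the cross term being killed because $\int_{\partial\Omega}f(\sigma)\frac{d\sigma}{\sigma-z}=2\pi i f(z)$ for $z\in\Omega$ combined with taking a boundary limit and conjugating, so that $\overline{f(\sigma_0)}\cdot 1$ and the jump term balance.

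Once the representation $g(\sigma_0)=\int_{\partial\Omega}\overline{f(\sigma)}\,\mu(\sigma(\cdot),\sigma_0)\,ds$ is established, both conclusions are immediate from \eqref{intmu}: since $\mu(\sigma(\cdot),\sigma_0)$ is a probability density on $\partial\Omega$ and $\sigma\mapsto\overline{f(\sigma)}$ is a continuous map of $\partial\Omega$ into the compact set $\overline{f(\partial\Omega)}$, the value $g(\sigma_0)$ is a barycenter of points of $\overline{f(\partial\Omega)}$, hence lies in $\mathrm{conv}(\overline{f(\partial\Omega)})$; and taking absolute values, $|g(\sigma_0)|\le\int_{\partial\Omega}|f(\sigma)|\,\mu(\sigma,\sigma_0)\,ds\le\|f\|_\infty$, whence $\|g\|_\infty\le\|f\|_\infty$ after recalling (maximum modulus for the holomorphic $g$) that the sup over $\overline\Omega$ is attained on $\partial\Omega$. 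I expect the main obstacle to be purely technical: justifying the passage to the boundary (the Plemelj limit and the absolute integrability of the difference quotient) rigorously when $f$ is merely in $\mathcal A(\Omega)$ rather than smooth — this I would handle by first proving everything for $f$ rational (where the curve integrals are classical and $\mu$ is smooth), obtaining the norm bound $\|g\|_\infty\le\|f\|_\infty$ and the convex-hull inclusion on that dense class, and then extending by uniform approximation (Mergelyan), noting that the convex-hull inclusion is stable under uniform limits.
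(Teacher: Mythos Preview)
Your plan is correct in its target and its endgame: once the boundary representation $g(\sigma_0)=\int_{\partial\Omega}\overline{f(\sigma)}\,\mu(\sigma,\sigma_0)\,ds$ is in hand, the probability-measure argument and the maximum principle give both conclusions exactly as you say, and this is precisely what the paper does too. The difference lies in how that representation is reached.

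The paper avoids Plemelj and principal values entirely by working first in the \emph{interior}. For $z\in\Omega$, conjugating Cauchy's formula for $f$ gives $\overline{f(z)}=\frac{1}{2\pi}\int_{\partial\Omega}\overline{f(\sigma)}\,\frac{\bar\nu}{\bar\sigma-\bar z}\,ds$, and adding this to the definition of $g(z)$ yields in one line
\[
g(z)=\int_{\partial\Omega}\overline{f(\sigma)}\,\mu(\sigma,z)\,ds-\overline{f(z)}.
\]
The right-hand side is a double layer potential with continuous density, minus a function already continuous on $\overline\Omega$; the classical Gauss/Neumann jump relation for the double layer potential (which \emph{is} valid for merely continuous densities on a smooth curve) then gives the continuous extension of $g$ and the boundary formula directly. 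This is the ``cleanest route'' you gesture at near the end, but carried out interior-first rather than boundary-first, so that no principal value ever appears and the ``cross term'' you struggle to kill is disposed of trivially.

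Your boundary-first route via Plemelj is workable but more delicate: the Sokhotski--Plemelj limit does \emph{not} hold uniformly for general $\varphi\in\mathcal C(\partial\Omega)$ (the Cauchy integral of a merely continuous function need not extend continuously to the boundary), so your opening claim is too strong as stated. Your fallback --- prove everything for rational $f$ and pass to uniform limits via Mergelyan --- does repair this and is a legitimate alternative. But the paper's interior identity makes the detour unnecessary: recognizing $g+\bar f$ as a double layer potential lets one invoke Neumann's continuity theorem directly for all $f\in\mathcal A(\Omega)$, without approximation or principal-value analysis.
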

\begin{proof} 
Clearly, $g$ is holomorphic in $\Omega$. Besides,
conjugating in the Cauchy formula
$$
f(z) = \frac{1}{2 \pi i} \int_{\partial \Omega} \,  \frac{f(\sigma) }{\sigma-z}\, d\sigma =
\frac{1}{2 \pi}  \int_{\partial \Omega} \, \frac{f(\sigma) \nu(\sigma)}{\sigma-z} \, ds, \qquad z \in \Omega,
$$
leads to
\[
\label{cauchy4g}
g(z) = \int_{\partial \Omega}\overline{f(\sigma )}\,\mu(\sigma,z)\,ds-\overline{f(z)} .
\]
Along the well-known jump formula (discovered by C.\,Gauss around 1815),
if $z$ tends to $\sigma _0\in \partial \Omega$,  then $g(z)$ tends to $g(\sigma _0)$
defined on the boundary by
\begin{equation}
g(\sigma _{0})=\int_{\partial \Omega\setminus\{\sigma _0\}}\overline{f(\sigma )}\,\mu(\sigma,\sigma _0)\,ds, \quad\text{for  } \sigma_0\in \partial  \Omega.
\end{equation}
It is known since Carl Neumann  \cite{neu} that with this extension the function $g$ is continuous in $\overline{\Omega}$,
see for instance \cite[Theorem 3.22]{fo}. (As noticed by Neumann, the radial continuity
up to the boundary follows easily while the global continuity requires a careful analysis).

Since $\mu (\sigma ,\sigma _0) \, ds$ is a probability measure on $\partial \Omega$, it follows 
from \eqref{cauchy4g} that $g(\partial \Omega)$ is contained in the closed convex hull of $\overline{f(\partial \Omega)}$ which, by  compactness,  coincides with $\mbox{\rm conv} (\overline{f}(\partial \Omega)) $. Using the maximum principle, we get $\| g \|_\infty \le \|f \|_\infty$.

\end{proof}
\begin{remark}
The first part of this lemma is still valid if $\Omega$ unbounded but, generally in this case, $\mu (\sigma ,\sigma _0) \, ds$ is no longer a probability measure, so that the convex hull part is not guarantee.
\end{remark}

The rest of the section concerns  the Hilbert space setting. Given a complex Hilbert space  $(H, \langle \cdot, \cdot \rangle)$, both the norm in $H$ and the induced norm in the algebra $B(H)$ of bounded linear operators on $H$ are denoted by $\| \cdot \|$.

For $A \in B(H)$ and $\sigma \in \partial \Omega$ in the resolvent of $A$,  we set
$$
\mu (\sigma ,A):=\frac{1}{2\pi}\big(\nu(\sigma {-}A)^{-1}+\overline{\nu}(\overline{\sigma} {-}A^*)^{-1}\big).
$$
It turns out that
\begin{equation}
\label{wgeo}
\overline{W(A)}\subset\Omega \quad \Longrightarrow \quad\nu(\overline\sigma {-}A^*)+\overline{\nu}(\sigma {-}A)>0
 \quad \Longrightarrow \quad \mu (\sigma ,A) >0,
\quad \forall \sigma \in\partial \Omega.
\end{equation}

Under the assumption  $\overline{W(A)} \subset \Omega$, it is also meaningful to define
$$
C(\varphi ,A)=\frac{1}{2\pi  i}\int_{\partial \Omega}\varphi (\sigma )(\sigma {-} A)^{-1} d \sigma  \in B(H).
$$
Notice that $C(\varphi,A)$ does not corresponds to $\varphi (A)$, unless $\varphi$ can be extended to a  member of $\mathcal{A}(\Omega)$.

\begin{lemma}
\label{lemaS}
For $\varphi \in \mathcal{C}(\Omega)$, let us set
$$
S(\varphi,A)=C(\varphi,A)+C( \overline{\varphi},A)^* \in B(H).
$$
Then we have
$$
\|S(\varphi,A)\|\leq 2 \| \varphi \|_{\partial \Omega} .
$$
\end{lemma}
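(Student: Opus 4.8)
The plan is to recast $S(\varphi,A)$ as a single operator-valued integral against the positive operators $\mu(\sigma,A)$, exactly in the spirit of the scalar manipulations behind \eqref{cauchy4g}, and then to squeeze the constant $2$ out of a twofold application of Cauchy--Schwarz. Throughout I would take for granted the standing hypothesis $\overline{W(A)}\subset\Omega$ (which is what makes $C(\varphi,A)$ and $\mu(\sigma,A)$ meaningful and which, via \eqref{wgeo}, guarantees $\mu(\sigma,A)>0$ for every $\sigma\in\partial\Omega$), and read the statement with $\varphi\in\mathcal C(\partial\Omega)$.

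First I would rewrite the two Cauchy transforms using the arclength parametrization and $\sigma'(s)=i\,\nu(\sigma(s))$, which turns $C(\varphi,A)$ into $\frac1{2\pi}\int_{\partial\Omega}\varphi(\sigma)\,\nu(\sigma)(\sigma{-}A)^{-1}\,ds$, and likewise for $C(\overline\varphi,A)$. Since $\bigl((\sigma{-}A)^{-1}\bigr)^{*}=(\overline\sigma{-}A^{*})^{-1}$ and the scalar $\nu(\sigma)$ passes to $\overline{\nu(\sigma)}$ under the adjoint, adding $C(\varphi,A)$ and $C(\overline\varphi,A)^{*}$ gives the clean representation
\[
S(\varphi,A)=\int_{\partial\Omega}\varphi(\sigma)\,\mu(\sigma,A)\,ds .
\]
Next I would record the operator analogue of the first identity in \eqref{intmu}, namely $\int_{\partial\Omega}\mu(\sigma,A)\,ds=2I$: each of the two resolvent halves of $\mu(\sigma,A)$ integrates to $I$, the first because $\frac1{2\pi i}\int_{\partial\Omega}(\sigma{-}A)^{-1}\,d\sigma=I$ by the holomorphic functional calculus applied to the constant function $1$ (the contour encircles $\sigma(A)\subset\overline{W(A)}\subset\Omega$), and the second because it is the adjoint of the first.

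The decisive step is then the estimate. Fix unit vectors $u,v\in H$ and set $M_\sigma:=\mu(\sigma,A)^{1/2}\ge 0$; then $\langle\mu(\sigma,A)u,v\rangle=\langle M_\sigma u,M_\sigma v\rangle$, so bounding $|\varphi|\le\|\varphi\|_{\partial\Omega}$ and using Cauchy--Schwarz in $H$ pointwise in $\sigma$,
\[
\bigl|\langle S(\varphi,A)u,v\rangle\bigr|\le\|\varphi\|_{\partial\Omega}\int_{\partial\Omega}\|M_\sigma u\|\,\|M_\sigma v\|\,ds .
\]
Applying Cauchy--Schwarz in $L^{2}(\partial\Omega,ds)$ and then the identity of the previous step, $\int_{\partial\Omega}\|M_\sigma u\|^{2}\,ds=\bigl\langle\bigl(\int_{\partial\Omega}\mu(\sigma,A)\,ds\bigr)u,u\bigr\rangle=2$ (likewise for $v$), yields $\bigl|\langle S(\varphi,A)u,v\rangle\bigr|\le 2\|\varphi\|_{\partial\Omega}$, whence $\|S(\varphi,A)\|\le 2\|\varphi\|_{\partial\Omega}$ on taking the supremum over unit $u,v$.

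I expect the only genuine subtlety to be precisely this last step: the naive route — estimating $\langle S(\varphi,A)v,v\rangle$ directly, which only controls the numerical radius, and then invoking $\|B\|\le 2\,w(B)$ — loses a factor $2$ and delivers the useless bound $4$; similarly, splitting $\varphi$ into real and imaginary parts does not reach $2$ either. Distributing $\mu(\sigma,A)=M_\sigma M_\sigma$ symmetrically between $u$ and $v$ and spending one Cauchy--Schwarz in $H$ and one in $L^{2}(\partial\Omega,ds)$ is exactly what recovers the sharp constant. Everything else is the routine bookkeeping of converting contour integrals to arclength integrals and taking adjoints under the integral sign.
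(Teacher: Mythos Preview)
Your argument is correct and follows essentially the same route as the paper: you derive the representation $S(\varphi,A)=\int_{\partial\Omega}\varphi(\sigma)\mu(\sigma,A)\,ds$, use $\int_{\partial\Omega}\mu(\sigma,A)\,ds=2I$, and then apply Cauchy--Schwarz twice (once in $H$ via the positivity of $\mu(\sigma,A)$, once in $L^2(\partial\Omega,ds)$) to reach the bound. The only cosmetic difference is that you pass through the square root $M_\sigma=\mu(\sigma,A)^{1/2}$ explicitly, whereas the paper invokes the equivalent inequality $|\langle\mu(\sigma,A)x,y\rangle|\le\langle\mu(\sigma,A)x,x\rangle^{1/2}\langle\mu(\sigma,A)y,y\rangle^{1/2}$ directly.
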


\begin{proof}
Let $\varphi : \partial \Omega \mapsto \C$ be continuous and let us assume, without loss of generality, that $\| \varphi \|_{\partial \Omega}=1$. It follows from the definition of $C(\varphi ,A) $ that
\begin{equation}
\label{S=CC}
S(\varphi,A)= \int_{\partial \Omega}\varphi (\sigma )\mu (\sigma ,A) \, ds.
\end{equation}

The remaining part of this proof is classical, we include it by the convenience of the reader. We first observe that, by Cauchy formula,  there holds  
$$
\int_{\partial \Omega} \mu(\sigma,A) \,ds=2.
$$ 
Besides,  for $\sigma \in \partial \Omega$, the operator $\mu (\sigma ,A)$ is self-adjoint and positive (\ref{wgeo}). Therefore,  for   $x, \, y \in H$, we have
 \begin{align*}
 |\langle S(\varphi,A)x,y\rangle|&=\Big|\int_{\partial \Omega}\varphi (\sigma )\langle\mu (\sigma ,A)x,y\rangle\,ds\Big|
 \leq \int_{\partial \Omega}|\langle\mu (\sigma ,A)x,y\rangle|\,ds\\
 &\leq  \int_{\partial \Omega}\langle\mu (\sigma ,A)x,x\rangle^{1/2}\langle\mu (\sigma ,A)y,y\rangle^{1/2}\,ds\\
 &\leq  \Big(\int_{\partial \Omega}\langle\mu (\sigma ,A)x,x\rangle\,ds\Big)^{1/2}
  \Big(\int_{\partial \Omega}\langle\mu (\sigma ,A)y,y\rangle\,ds\Big)^{1/2}\\
 & =  \Big\langle\int_{\partial \Omega}\mu (\sigma ,A)\,ds\  x,x\Big\rangle^{1/2}
 \Big\langle\int_{\partial \Omega}\mu (\sigma ,A)\,ds\ y,y\Big\rangle^{1/2} \\
&=2\,\|x\|\,\|y\|,
 \end{align*}
so that
$$
\| S(f,A) \| = \sup_{\|x\|=1, \|y\|=1}   |\langle S(\varphi,A)x,y\rangle| \le 2.
$$
\end{proof}

\begin{remark}
 In the particular case of the unit disk  $\Omega=\D$ and for $f\in \mathcal{A}(\Omega)$, it follows from Cauchy formula
 that $C( \overline{f},z)=\overline{f(0)}$ and $C( \overline{f},A)^* =f(0)$. Then, after the representation $f(A)=S(f,A)-f(0)$, a direct application of Lemma~\ref{lemaS} yields the famous Berger-Stampfli estimate \cite{bs} 
 \[
 \|f(A)\|\leq 2\,\|f\|_{\infty},\qquad \text{if\quad}f(0)=0.
 \]
Let us point out that a further result of Okubo and Ando \cite{okan} shows that this estimate remains valid even  if $f(0)\neq 0$.
Note that the proof of Berger and Stampfli, as the ones of Okubo and Ando, are based on  dilation theory. 
 \end{remark}
\begin{remark}\label{r6}
If $\Omega$ is unbounded, there exists a greater $\alpha\geq 0$ such that $\Omega$ contains a sector of angle $2\alpha$. Then this lemma is still valid with the improvement
 $$
\|S(\varphi,A)\|\leq 2\,\frac{\pi {-}\alpha}{\pi } \| \varphi \|_{\partial \Omega}.
$$
\end{remark}

\section{Main result}
\label{MR}
In this section, $H$ will denote a complex Hilbert space, $A$ a bounded operator on $H$ and 
$\Omega$ a bounded convex domain of $\C$ with smooth boundary. As we commented in the Introduction,  this smoothness assumption, convenient to avoid technical difficulties in the proofs, may be easily relaxed afterwards.

\begin{theorem}
\label{improved} 
The following uniform bounds holds:
$C_\Omega\leq 1+\sqrt2$.
\end{theorem}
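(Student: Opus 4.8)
The strategy is to reduce the bound to a scalar quadratic inequality. By the reductions explained in the Introduction it suffices to show $\|f(A)\|\le(1{+}\sqrt2)\,\|f\|_\infty$ for $f\in\mathcal A(\Omega)$ and $A\in B(H)$ with $\overline{W(A)}\subset\Omega$; normalise $\|f\|_\infty=1$. Set $g:=C(\bar f,\cdot)$, so that by Lemma~\ref{average} we have $g\in\mathcal A(\Omega)$, $\|g\|_\infty\le1$, and $g(\overline\Omega)\subset\mathrm{conv}(\overline{f(\partial\Omega)})$. The first step is the operator identity
$$
f(A)+g(A)^*=S(f,A),
$$
with $S$ as in Lemma~\ref{lemaS}; this rests on the fact that $C(\bar f,A)=g(A)$, which follows by writing $g(A)=\frac1{2\pi i}\int_\Gamma g(w)(w{-}A)^{-1}\,dw$ for a contour $\Gamma\subset\Omega$ enclosing the spectrum of $A$, substituting $g(w)=C(\bar f,w)$, exchanging the order of integration, and evaluating the inner $w$-integral by Cauchy's formula (the remaining pole $w=\sigma\in\partial\Omega$ lies outside $\Gamma$). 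Combined with Lemma~\ref{lemaS} the identity gives $\|f(A)\|\le2+\|g(A)\|$, but this is circular, since a priori $\|g(A)\|$ is only controlled by $C_\Omega\|g\|_\infty$; a genuine refinement is needed.

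The refinement I would aim for is the quadratic estimate: for every unit vector $v$, with $u:=f(A)v$,
$$
\|u\|^2\le2\,\|u\|+1 .
$$
Granting this, $t^2-2t-1\le0$ forces $\|u\|\le1{+}\sqrt2$, hence $\|f(A)\|\le1{+}\sqrt2$; the smoothness of $\partial\Omega$ and boundedness of $\Omega$ are then removed exactly as in the Introduction. To obtain the estimate I would start from the identity and $g(A)^*v=S(f,A)v-u$ to write $\|u\|^2=\langle S(f,A)v,u\rangle-\langle v,g(A)u\rangle$, take real parts, and bound $\Re\langle S(f,A)v,u\rangle\le\|S(f,A)\|\,\|u\|\le2\|u\|$ by Lemma~\ref{lemaS}; it then remains to prove the lower bound $\Re\langle g(A)f(A)v,v\rangle\ge-1$ for the ``cross term''. (When $f$ is valued in a sector of angle $\pi/2$ and vertex $0$ one expects the sharper $\Re\langle g(A)f(A)v,v\rangle\ge0$, whence $\|f(A)\|\le2$, matching Remark~\ref{final}.)

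Controlling the cross term is the main obstacle. One cannot simply use $|gf|\le1$ on $\partial\Omega$ — that only gives the cruder bound $\ge-2$ — nor pass from a pointwise inequality $\Re(gf)\ge c$ in $\Omega$ to $\Re\langle(gf)(A)v,v\rangle\ge c$, which is false in general. Instead two genuine ingredients must be combined: first, the positivity $\mu(\sigma,A)\ge0$ together with the representation $S(\varphi,A)=\int_{\partial\Omega}\varphi(\sigma)\mu(\sigma,A)\,ds$ of Lemma~\ref{lemaS}; and second, the boundary relation $f(\sigma)+\overline{g(\sigma)}=\int_{\partial\Omega}f(\tau)\mu(\tau,\sigma)\,ds$ obtained by conjugating Cauchy's formula, which by Lemma~\ref{average} places the pair $(f(\sigma),\overline{g(\sigma)})$ under the constraint $f(\sigma)+\overline{g(\sigma)}\in\mathrm{conv}(f(\partial\Omega))$, a set of radius $\le1$. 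Feeding both into a Cauchy--Schwarz argument of the type used to prove Lemma~\ref{lemaS} — applied to the positive densities $\mu(\sigma,A)$ and to the vectors $v$, $u$ and $g(A)^*v=S(f,A)v-u$ — should squeeze out the constant $1$. Everything else (the identity, the two lemmata, the final quadratic, and the passage to general $\Omega$ and to unbounded operators) is routine.
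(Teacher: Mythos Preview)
Your setup is correct and matches the paper: the identity $f(A)+g(A)^*=S(f,A)$ with $g=C(\bar f,\cdot)$, together with Lemmata~\ref{average} and~\ref{lemaS}, is exactly the starting point. The gap is the cross--term bound. You reduce everything to $\Re\langle (gf)(A)v,v\rangle\ge -1$, acknowledge this is ``the main obstacle'', and then only offer a heuristic (``feeding both into a Cauchy--Schwarz argument \dots\ should squeeze out the constant~$1$''). But there is no mechanism here: $(gf)(A)$ is not of the form $S(\varphi,A)$, so the positivity of $\mu(\sigma,A)$ and the Cauchy--Schwarz step of Lemma~\ref{lemaS} do not apply to it; and the pointwise information $|gf|\le 1$ on $\Omega$ does not control $\Re\langle (gf)(A)v,v\rangle$ from below (your own remark). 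Even \emph{after} the theorem is proved, the best available bound is $w((gf)(A))\le\sqrt2$ from Corollary~\ref{cor}, yielding only $\Re\langle (gf)(A)v,v\rangle\ge-\sqrt2$; so the inequality you need is at least as strong as the theorem itself, and your sketch gives no independent route to it. (Your claimed ``cruder bound $\ge-2$'' is also unjustified without assuming the conjecture $\mathcal Q=2$.)

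The paper avoids this obstacle by a different manoeuvre. Rather than fixing $v$ and aiming at a quadratic in $\|u\|$, it fixes a near-extremal pair $(f,A)$ with $\lambda=\|f(A)\|\ge C_\Omega-\varepsilon$, writes
\[
\lambda^2-f(A)^*f(A)=\bigl(I-S^*h(A)\bigr)\bigl(\lambda^2+fg(A)\bigr),\qquad h:=\dfrac{f}{\lambda^2+fg}\in\mathcal A(\Omega),
\]
observes that the left side is singular while $\lambda^2+fg(A)$ is invertible (since $|\lambda^2+fg|\ge\lambda^2-1>0$), hence $\|S^*h(A)\|\ge1$, and then \emph{bootstraps}: $\|h(A)\|\le C_\Omega\|h\|_\infty\le C_\Omega/(\lambda^2-1)$, which together with $\|S\|\le2$ gives $\lambda^2\le 2C_\Omega+1$ and thus $C_\Omega^2\le 2C_\Omega+1$. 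The key idea you are missing is precisely this use of the \emph{a~priori} finiteness of $C_\Omega$ (from Delyon--Delyon) to turn the estimate into a self-improving inequality on $C_\Omega$; it replaces the intractable lower bound on $\Re\langle (gf)(A)v,v\rangle$ by a clean norm bound on the auxiliary function $h$.
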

\begin{proof} It suffices to look at the case $C_\Omega>1$. Then, since $C_\Omega$ is the best constant, given $0<\varepsilon<C_\Omega-1$, there exist $f\in {\cal A}(\Omega)$, with $\|f\|_{\infty}=1$, a Hilbert space $H$ and  an operator $A \in B(H)$ with
$\overline W(A)\subset \Omega$ and such that $\lambda =\|f(A)\|\geq C_\Omega{-}\varepsilon >1$.

Let us set $g=C(\overline{f},\cdot) \in \mathcal{A}(\Omega)$ and $S=C(f,A)+C(g,A)^* \in B(H)$. We note that
for the multiplication $fg \in \mathcal{A}(\Omega)$  there holds $fg(A)=g(A)f(A)$, so that
$$
\lambda^2-f(A)^*f(A)=\lambda^2-S^* f(A)+fg (A) .
$$
Moreover, by Lemma~\ref{average}, we have $\| fg \|_\infty \le 1$, and since
\begin{equation}
\label{estimh}
| \lambda^2 + fg | \ge \lambda^2-1 >0,
\end{equation}
we deduce that the mapping $\lambda^2 + fg \in \mathcal{A}(\Omega)$ never vanishes. Therefore, the operator $\lambda^2+fg(A)$ is invertible and we can write
$$
\lambda^2-f(A)^*f(A) = (I-S^*h(A)) (\lambda^2+fg(A)),
$$
where $h=f/(\lambda^2{+}fg) \in  \mathcal{A}(\Omega)$. Next, we observe that the operator $\lambda^2-f(A)^*f(A)$ is singular, whence the factor $ (I-S^*h(A))$ is also singular. Therefore, $1 \le \| S^* h(A) \|$ and then, in view of Lemma~\ref{lemaS} and (\ref{estimh}), we obtain
$$
1 \le 2  \| h(A) \| \le 2\,C_\Omega \| h \|_\infty \le  \frac{2\, C_\Omega}{\lambda^2-1}.
$$
This shows that $(C_\Omega{-}\varepsilon )^2 = \lambda^2 \le 2\,C_\Omega{+}1$, which,  by letting $\varepsilon \to 0_+$, readily yields 
$$
C_\Omega^2 \le 2\, C_\Omega +1, \quad  \text{and thus}  \quad C_\Omega \le 1{+}\sqrt{2}.
$$
\end{proof}

As it is easily checked, Theorem~\ref{improved} remains valid in the complete version. Therefore, since
$$
\frac{1}{2} \left( {1+\sqrt{2} + \frac{1}{1+\sqrt{2}} } \right) = \sqrt{2},
$$
application of Theorem~\ref{improved} and \cite[Theorem~3.1]{dpw} readily leads to the next

\begin{corollary}
\label{cor}
We assume that $W(A) \subset \overline{\Omega}$. Then there holds
$$
w(f(A)) \le \sqrt{2}\  \| f \|_\infty,  \qquad \forall f \in \mathcal{A}(\Omega).
$$ 
\end{corollary}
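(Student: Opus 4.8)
The plan is to obtain Corollary~\ref{cor} by combining Theorem~\ref{improved} with the transference principle \cite[Theorem~3.1]{dpw}, which turns a complete $c$-spectral set into a complete $\rho$-radius set with $\rho=\tfrac12(c+c^{-1})$, and then by specializing to $c=1{+}\sqrt2$, for which the identity displayed just before the statement gives $\rho=\sqrt2$ (equivalently, $(1{+}\sqrt2)^{-1}=\sqrt2-1$). Thus the only things to supply are the complete version of Theorem~\ref{improved}, the passage from $\overline{W(A)}\subset\Omega$ to $W(A)\subset\overline\Omega$, and the passage from rational $f$ to arbitrary $f\in\mathcal{A}(\Omega)$.

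First I would check that the proof of Theorem~\ref{improved} is unchanged for matrix- (or operator-) valued $f$: Lemma~\ref{average} still yields $\|fg\|_\infty\le 1$ for the componentwise Cauchy transform, Lemma~\ref{lemaS} is already operator-theoretic, and the factorization $\lambda^2-f(A)^*f(A)=(I-S^*h(A))(\lambda^2+fg(A))$ together with the singularity argument carry over verbatim. Hence $C_\Omega\le 1{+}\sqrt2$ holds in the complete sense as well.

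Next, for a bounded operator $A$ the set $\overline{W(A)}$ is compact and convex, so it can be exhausted by smooth bounded convex domains $\Omega_n\supset\overline{W(A)}$ converging to $\overline{W(A)}$ in the Hausdorff sense. Applying the complete Theorem~\ref{improved} on each $\Omega_n$ and letting $n\to\infty$ (exactly as in the derivation of \eqref{our11}) shows that $W(A)$ is itself a complete $(1{+}\sqrt2)$-spectral set for $A$, intrinsically, and in particular whenever $W(A)\subset\overline\Omega$. Invoking \cite[Theorem~3.1]{dpw} with $K=W(A)$ and $c=1{+}\sqrt2$ then gives $w(f(A))\le\sqrt2\,\sup_{z\in W(A)}|f(z)|\le\sqrt2\,\|f\|_\infty$ for every rational $f$ bounded on $W(A)$.

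Finally, to reach an arbitrary $f\in\mathcal{A}(\Omega)$ I would invoke Mergelyan's theorem: $\overline{W(A)}$ being compact, convex and with connected complement, $f$ is the uniform limit on $\overline{W(A)}$ of polynomials $f_n$, with $\|f_n\|_{\overline{W(A)}}\to\|f\|_{\overline{W(A)}}\le\|f\|_\infty$; since $\|f_n(A)-f(A)\|\le(1{+}\sqrt2)\|f_n-f\|_{\overline{W(A)}}\to 0$ and $w(\cdot)$ is continuous (being a norm equivalent to $\|\cdot\|$), letting $n\to\infty$ in $w(f_n(A))\le\sqrt2\,\|f_n\|_{\overline{W(A)}}$ yields the claim. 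I do not expect a genuine obstacle here: the substance of the corollary lies entirely in Theorem~\ref{improved} and in the cited result of \cite{dpw}, and the steps above are routine verifications, the most delicate being the (straightforward) check that the proof of Theorem~\ref{improved} survives in the complete version.
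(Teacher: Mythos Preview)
Your proposal is correct and follows essentially the same route as the paper: the paper simply notes that Theorem~\ref{improved} holds in the complete version, invokes the identity $\tfrac12\bigl((1{+}\sqrt2)+(1{+}\sqrt2)^{-1}\bigr)=\sqrt2$, and appeals to \cite[Theorem~3.1]{dpw}. You have merely spelled out the routine verifications (matrix-valued extension of the proof, exhaustion of $\overline{W(A)}$ by smooth convex domains, and the Mergelyan approximation to pass from rational functions to $\mathcal{A}(\Omega)$) that the paper leaves implicit.
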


\begin{remark}
\label{converse}
In principle, estimating $\|f(A) \|$ from the Corollary, in a direct way, would give $\| f(A) \| \le 2 w(f(A)) \le 2\sqrt{2} \|f\|_\infty$. However, the Corollary holds in its complete version indeed and then \cite[Theorem~3.1]{dpw}  shows that the statements in Theorem~\ref{improved} and in Corollary~\ref{cor} are equivalents.
\end{remark}

\begin{remark}
\label{drudry}
In the particular case of the unit disk $\Omega=\D$, Drury \cite{drury} has obtained a more accurate estimate $w(f(A))\leq \frac54\| f \|_\infty$ (see also \cite{klamr}); this constant $\frac54$ is optimal. Previously, it was known that, if furthermore $f(0)=0$, then $w(f(A))\leq \| f \|_\infty$; this result was obtained indepently by Kato \cite{kato} and by Berger and Stampfli \cite{bs}.
\end{remark}

\begin{remark}
\label{final}
For  $\Sigma \subset \C$, set
$$
\mathcal{A}(\Omega,\Sigma) = \{ \, f \in \mathcal{A}(\Omega) \, : \, f(\Omega) \subset \Sigma \, \}.
$$

With very few changes, the proof of Theorem~\ref{improved} also shows that 
$$
\| f(A) \| \le 2 \| f\|_\infty, \qquad \forall f \in \mathcal{A}(\Omega,\Sigma),
$$
whenever $\Sigma \subset \C$ is a sector with vertex at the origin and angle $\pi/2$. To see this, we first fix $A$ and set
$$
C_{\Omega,\Sigma} = \sup \{ \, \| f(A) \| \, : \, f \in \mathcal{A}(\Omega,\Sigma), \, \, \|f\|_\infty =1 \, \} < C_\Omega,
$$
so that we must prove that $C_{\Omega,\Sigma} \le 2$. It suffices to consider the case $1 <  C_{\Omega,\Sigma}$ and we also note that there is no loss of generality in assuming that $\Sigma$ is the sector $|\arg(z)| \le \pi/4$.

Given $\epsilon >0$, we can select $ f \in \mathcal{A}(\Omega,\Sigma)$ such that $\|f\|_\infty =1$ and
$$
\lambda = \|f(A) \| \ge C_{\Omega,\Sigma}-\epsilon >1.
$$
Following the same steps and notations than in the proof of Theorem~\ref{improved}, we  first observe that, in view of Lemma~\ref{lemaS}, the mapping $g=C(\overline{f},\cdot)$  also takes values in $\Sigma$. Therefore, at points $z \in \Omega$ where $f(z) \ne 0$,  we have
$$
h(z)=\frac{f(z)}{\lambda^2{+}f(z)g(z)}=\frac{|f(z)|^2}{\lambda^2 \overline{f(z)}+|f(z)|^2g(z)}
$$
and, since  $\overline{f(z)}+|f(z)|^2g(z) \in \Sigma$, we deduce that $h(z) \in \Sigma$. We thus conclude that  $h \in \mathcal{A}(\Omega,\Sigma)$.  Furthermore, it is also clear that $\Re(fg) \ge 0$, so that  $|\lambda^2 + fg| \ge \lambda^2$, which gives  $\| h \|_\infty \le 1/\lambda^2$. 

Finally,  since $h \in \mathcal{A}(\Omega,\Sigma)$, we have  $ \| h(A) \| \le C_{\Omega,\Sigma} \| h \|_\infty \le C_{\Omega,\Sigma}/\lambda^2$ and, recalling that $1 \le 2 \| h(A) \|$, we obtain
$$
1 \le 2 \| h(A) \| \le C_{\Omega,\Sigma}/\lambda^2 \,  \Rightarrow \, \lambda^2 \le 2 C_{\Omega,\Sigma},
$$
whence
$$
( C_{\Omega,\Sigma}- \epsilon)^2 \le \lambda^2 \le 2 C_{\Omega,\Sigma}.
$$
\end{remark}  

\medskip
\noindent
{ACKNOWLEDGEMENTS: Second author has been financed by the Spanish Ministerio de Econom\'{\i}a y Competitividad under project MTM2014-54710-P.}


\begin{thebibliography}{11}

\bibitem{bs}{\sc C.A.~Berger and J.G.~Stampfli}, {\em Mapping theorems for the numerical range}, {Am. J. Math.}, {\bf 89} (1967), pp.~1047--1055.


\bibitem{choi} {\sc D.~Choi}, {\em A proof of Crouzeix's conjecture for a class of matrices},
{ Linear Algebra and its Applications}, {\bf 438}, no.~8 (2013), pp.~3247--3257.

\bibitem{chgr} {\sc D.~Choi, A.~Greenbaum}, {\em Roots of matrices in the study of GMRES convergence and Crouzeix's conjecture}. SIAM J. Matrix Anal. Appl. {\bf 36} (2015), no. 1, pp.~289--301.

\bibitem{crzx} {\sc M.~Crouzeix}, {\em Bounds for analytic functions of matrices},
{ Int. Equ. Op. Th.}, {\bf 48}, (2004), pp.~461--477.

\bibitem{crac} {\sc M.~Crouzeix}, {\em Numerical range and functional calculus in Hilbert space},
{J. Funct. Anal.}, {\bf 244} (2007), pp.~668--690.

\bibitem{crzx1} {\sc M.~Crouzeix},
{\em Some constants related to numerical ranges}, {SIAM Journal on Matrix Analysis and Applications}, {\bf 37} (2016), pp.~420--442.

\bibitem{dpw}{\sc K.R.~Davidson, V.I.~Paulsen and H.J.~Woerdeman}, {\em Complete spectral sets and Numerical Range}, {ArXiv:1612.05633v1}.

\bibitem{drury} {\sc S.W.~Drury}, {\em Symbolic calculus of operators with unit numerical radius}, Lin. Alg. Appl. {\bf 428} (2008), pp.~2061--2069.

\bibitem{fo} {\sc G.B.~Folland},
{\em Introduction to Partial Differential Equations}, {Princeton Univ. Press, 1995.}
 
\bibitem{grlo} {\sc A.\,Greenbaum, A.S.\,Lewis, M.L.\,Overton}, {\em Variational Analysis of the Crouzeix Ratio}, Mathematical Programming (2016), to appear.

\bibitem{dede} {\sc B.~\& F.~Delyon}, {\em Generalization of Von Neumann's spectral sets and integral representation of operators}, {Bull. Soc. Math. France}, {\bf 1} (1999), pp.~25--42.

\bibitem{kato} {\sc T.~Kato}, {\em Some mapping theorems for the numerical range}, {Proc. Japan Acad.} {\bf 41} (1965), pp.~65--655.

\bibitem{klamr} {\sc  H.~Klaja, J.~Mashreghi, and Th.~Ransford,}{ On mapping theorems for numerical range.} Proc. Amer. Math. Soc. {\bf 144} (2016), no.~7, pp.~3009--3018. 

\bibitem{neu}{\sc C.~Neumann}, {\em Unterschungen \"uber das logarithmische und Newton'sche Potential}, {Leipzig,} (1877).

\bibitem{okan}{\sc K.~Okubo and  T.~Ando}, Constants related to operators of class $C_{\rho}$,
{\it Manuscripta Math. }{\bf 16}, no~4, (1975), pp.~385--394.

\bibitem{paul} {\sc V.~Paulsen},
{\em Completely bounded maps and operator algebras}, {Cambridge Univ. Press, 2002.}

\end{thebibliography}
\end{document}